\tikzstyle{black}=[fill=white, draw=black, shape=circle]
\tikzstyle{forward}=[-{Stealth[scale=1.25]}]
\tikzstyle{twoArrows}=[{Stealth[scale=1.25]}-{Stealth[scale=1.25]}]
\DeclareMathOperator{\Aut}{Aut}
\DeclareMathOperator{\Conj}{Conj}
\DeclareMathOperator{\id}{id}
\DeclareMathOperator{\Inn}{Inn}
\DeclareMathOperator{\lab}{lab}
\DeclareMathOperator{\qnd}{qnd}
\DeclareMathOperator{\rack}{rack}
\DeclareMathOperator{\RMlt}{RMlt}
\newcommand{\inv}{^{-1}}
\newcommand{\Q}{\mathbb{Q}}
\newcommand{\Sch}{\mathrm{Sch}}
\newcommand{\tr}{\triangleleft}
\newcommand{\und}{\mathrm{und}}
\newcommand{\Z}{\mathbb{Z}}
\title{Graph quandles: Generalized Cayley graphs of racks and right quasigroups}
\authors{L\d\uhorn c Ta}
\abstract{This article lays the foundations for an analogue of geometric group theory that studies actions on graphs by right quasigroups, including racks and quandles. We study markings of graphs that realize racks, and we introduce (di)graph invariants based on such markings. We show that all right quasigroups are realizable by edgeless graphs and complete (di)graphs. Using Schreier (di)graphs, we also characterize Cayley (di)graphs of right quasigroups $Q$ that realize $Q$. In particular, all racks are realizable by their full Cayley (di)graphs. This solves two problems of Valeriy Bardakov. Finally, we give graph-theoretic characterizations of labeled Cayley digraphs of right-cancellative magmas, right-divisible magmas, right quasigroups, racks, quandles, involutory racks, and kei.}
\keywords{Cayley graph, groupoid, labeled directed graph, magma, quandle}
\begin{document}

	\section {Introduction}

	\subsection{Motivating discussion}

	In 1992, Fenn and Rourke~\cite{fenn} introduced \emph{racks} to develop complete invariants of framed links. Racks are generalizations of \emph{quandles}, which Joyce~\cite{joyce} and Matveev~\cite{matveev} independently introduced in 1982 to develop complete invariants of unframed links. In turn, quandles generalize algebraic structures called \emph{kei}, which Takasaki~\cite{takasaki} introduced in 1942 to study Riemannian symmetric spaces. Racks and quandles play important roles in knot theory, low-dimensional topology, and quantum algebra, making it important to understand their structure. For general references on the theory, see~\cite{book,quandlebook}.

	Similarly to groups, there is a~rich literature on finite racks and quandles in algebra and topology (see~\cite{survey} for references), but studying infinite racks is more difficult. In recent years, this problem has created interest in developing a~more geometric understanding of racks and quandles~\cite{bardakov}. To that end, various authors have adapted methods from geometric group theory to study racks, including Cayley graphs~\cite{metrics}, certain metrics~\cite{metrics, kedra}, and even a~bounded cohomology theory for racks and quandles~\cite{kedra, kapari, saraf}.

	Racks, quandles, and groups are special classes of nonassociative algebraic structures called \emph{right quasigroups}, which are used to study column Latin squares~\cite{Latin}, smooth deformations of Lie group structures~\cite{diff}, and nonassociative generalizations of Hopf algebras~\cite{quantum}. To develop an analogue of geometric group theory for racks and quandles, it is therefore natural to study actions on geometric objects by right quasigroups rather than groups. In this paper, we take those geometric objects to be graphs, particularly labeled Cayley digraphs and other (di)graphs obtained from racks.

	\subsubsection{Racks as symmetries}
    
	In 2020, Bardakov~\cite{bardakov} introduced a~way to construct right quasigroups from \emph{markings} of graphs by graph automorphisms. Seeking geometric interpretations of racks, Bardakov posed the following questions about which marked graphs realize racks and how they relate to Cayley graphs of racks.
    
	\begin{problem}\label{prob1}
		Under what conditions does a~marked graph realize a~rack or a~quandle?
	\end{problem}

	\begin{problem}\label{prob2}
		Given a~rack or quandle $Q$, is there always a~marked graph $(\Gamma, R)$ that realizes $Q$? If so, can we choose $\Gamma$ to be a~Cayley graph of $Q$?
	\end{problem}
    
	We answer Bardakov's questions with the following results. In the following, we refer to directed graphs as \emph{digraphs} and simple undirected graphs as \emph{graphs}.
    
	\begin{theorem}[see Theorem~\ref{thm:1}]
		Let $\Gamma$ be a~\textup{(}di\textup{)}graph, denote its vertex set by $V$, and let $R:V\to \Aut\Gamma$ be a~marking \textup{(}resp.\ q-marking\textup{)} of $V$. Then the right quasigroup $V^\Gamma_R$ realized by $\Gamma$ is a~rack \textup{(}resp.\ quandle\textup{)} if and only if $R$ is a~magma homomorphism from $V^\Gamma_R$ to $\Conj(\Aut \Gamma)$.
	\end{theorem}

	\begin{proposition}[see Proposition~\ref{prop:edgeless}]
		All right quasigroups are realizable by edgeless graphs and complete \textup{(}di\textup{)}graphs.
	\end{proposition}

	\begin{theorem}[see Theorems~\ref{thm:marked1} and~\ref{thm:marked2}]
		Let $V_R$ be a~right quasigroup, and let $\Gamma$ be a~Cayley digraph \textup{(}resp.\ graph\textup{)} of $V_R$ with connection set $S\subseteq V$. Then $(\Gamma,R)$ realizes $V_R$ if and only if, for all $h,v\in V$ and $s\in S$, there exists an element $t\in S$ such that
		\[
			R_t R_h (v)=R_h R_s (v) \quad
            \text{(resp. } R_t^{\pm 1}R_v (w)=R_v R_s (w) \text{)}.
		\]
	\end{theorem}

	\begin{corollary}[see Corollary~\ref{cor:conj}]
		All racks are realizable by their full Cayley \textup{(}di\textup{)}graphs.
	\end{corollary}
    
	Theorem~\ref{thm:1} answers Problem~\ref{prob1}. Since all racks are right quasigroups, Proposition~\ref{prop:edgeless} answers a~generalized version of the first question in Problem~\ref{prob2}. Corollary~\ref{cor:conj} answers the second question in Problem~\ref{prob2}, while Theorems~\ref{thm:marked1} and~\ref{thm:marked2} answer generalized forms of the question. As an application of marked graphs, we also introduce two integer invariants $\mu_{\mathrm{rack}},\mu_{\mathrm{qnd}}$ of (di)graphs.

	\subsubsection{Characterization of labeled Cayley digraphs}
    
	Although this paper is the first to study labeled Cayley (di)graphs of right quasigroups in general, various authors have studied Cayley graphs of various classes of right quasigroups appearing in combinatorics, algebraic topology, and knot theory. For example, full Cayley digraphs of unital, fixed point-free right quasigroups have various applications in network theory~\cite{groupoid1}, and full Cayley graphs of right quasigroups have applications in categorical covering theory~\cite{categories}.

	Introduced by Winker~\cite{winker} in 1984, full Cayley graphs of racks help classify finite quotients of fundamental quandles of links~\cite{link3, link2} and generalizations of these quotients~\cite{link1}. Full Cayley graphs of racks can even be interpreted as 1-skeletons of CW complexes called \emph{extended rack spaces} and used to construct homotopy invariants of links~\cite{homology}. A very recent application of Cayley graphs of racks makes it possible to study infinite quandles via methods adapted from geometric group theory~\cite{metrics}.

	In this light, a~graph-theoretic rather than purely algebraic characterization of Cayley graphs of right quasigroups and racks is desirable. A question of Hamkins~\cite{hamkins} calls for such a~characterization for Cayley graphs of groups; in response, Caucal~\cite{caucal, caucal2} generalized this question to the settings of magmas and \emph{labeled digraphs} or \emph{labeled transition systems}, that is, digraphs with an assignment of directed edges (rather than vertices) to elements of a~distinguished \emph{labeling set}. Unlike Caucal, we assume the labeling set to be a~subset of the vertex set.
    
	\begin{problem}\label{prob3}
		Given a~full subcategory $\mathcal{C}$ of magmas (e.g., groups, monoids, quandles), are there graph-theoretic conditions that characterize labeled Cayley digraphs of objects in $\mathcal{C}$?
	\end{problem}
    
	Caucal answered Problem~\ref{prob3} for left quasigroups, quasigroups~\cite{caucal}, semigroups, and various classes of monoids, including groups~\cite{caucal2}. Note that the answers to Problem~\ref{prob3} for left and right quasigroups are distinct because the right-multiplication maps $R_v$ of left quasigroups are not necessarily permutations. Chishwashwa et al.\ addressed a~similar question for vertex-labeled Cayley digraphs of unital, fixed point-free right quasigroups~\cite{groupoid1}. In this paper, we answer Problem~\ref{prob3} for various classes of racks and right quasigroups.
    
	\begin{theorem}[see Theorems~\ref{thm:lab1} and~\ref{thm:lab2}]
		Let $\mathcal{Q}$ be the class of all labeled digraphs that are deterministic, source-complete, codeterministic, and target-complete. Then $\mathcal{Q}$ is precisely the class of labeled Cayley digraphs of right quasigroups.

		Moreover, there exist graph theoretic-conditions on $\mathcal{Q}$ that restrict it to the subclasses of labeled Cayley digraphs of racks and quandles.
	\end{theorem}
    
	\subsection{Structure of the paper}

	In Section~\ref{sec:prelims}, we discuss right quasigroups, racks, and quandles.
	In Section~\ref{sec:grph}, we discuss (di)graphs, marked graphs in the sense of Bardakov, Cayley (di)graphs of magmas in the sense of Caucal, and Schreier graphs of group actions.
	In Section~\ref{sec:ex}, we give examples of Cayley (di)graphs of right quasigroups and their markings.
	In Section~\ref{sec:3}, we answer Problem~\ref{prob1}. As an application, we introduce two rack-theoretic (di)graph invariants $\mu_{\mathrm{rack}},\mu_{\mathrm{qnd}}$ with general results for path graphs and cycle graphs.
	In Section~\ref{sec:4}, we answer Problem~\ref{prob2} and its analogues for right quasigroups and digraphs.
	In Section~\ref{sec:5}, we define labeled Cayley digraphs of magmas in the sense of Caucal, and we answer Problem~\ref{prob3} for right-cancellative magmas, right-divisible magmas, right quasigroups, racks, quandles, involutory racks, and kei.
	In Section~\ref{sec:6}, we propose directions for future research.

	\subsection*{Notation}
	Given a~positive integer $n\in\Z^+$, let $[n]$ denote the set $\{1,2,\dots,n\}$. Denote the symmetric group of $[n]$ by $S_n$ with its elements written in cycle notation, and denote the symmetric group of any other set $X$ by $S_X$. We also denote the composition of functions $\varphi:V\to W$ and $\psi:W\to X$ by $\psi\varphi$, and we denote the identity map on a~set $V$ by $\id_V$. For $n\geq 3$, let $D_n$ be the dihedral group of order $2n$. Given a~subset $S$ of a~group $G$, let $S\inv:=\{s\inv\mid s\in S\}$.

	\section{Algebraic preliminaries}\label{sec:prelims}

	We recall the definitions of right quasigroups, racks, and quandles.

	\subsection{Magmas} Right quasigroups are examples of more general algebraic structures called \emph{magmas}.

	\begin{definition}
		A \emph{magma} or \emph{groupoid} is a~pair $(V,R)$, denoted by $V_R$, where $V$ is a~set and $R$ is a~mapping from $V$ to the set of functions from $V$ to $V$. For all $v\in V$, we call the map $R_v:= R(v)$ a~\emph{right-multiplication map} or \emph{right-translation map}.

		Moreover, we say that $V_R$ is a~\emph{right quasigroup} if $R(V)\subseteq S_V$, that is, if all right-multiplication maps are permutations of $V$. We say that $R$ is a~\emph{right quasigroup structure} on $V$.
	\end{definition}

	\begin{remark}
		Although the notation $V_R$ is nonstandard, we use it because Bardakov~\cite{bardakov} formulated his open problems with it.
	\end{remark}

	\begin{example}\label{ex:right0}
		The (right) regular action $R:G\to S_G$ of a~group $G$, given by $R_h (g):=gh$, is a~right quasigroup structure on $G$. Thus, right quasigroups generalize groups.
	\end{example}

	\begin{example}\label{ex:right}
		If we take $G:=\Q$ to be the set of rational numbers in Example~\ref{ex:right0}, then the positive rational numbers $\Q^+$ are a~submagma of $\Q$ but not a~right quasigroup.
	\end{example}

	\begin{definition}
		Let $V_R$ and $W_T$ be magmas. A \emph{magma homomorphism} from $V_R$ to $W_T$ is a~function $\varphi:V\to W$ satisfying
		\[
            \varphi R_v=T_{\varphi(v)}\varphi
            \quad \text{for all } v \in V.
		\]
	\end{definition}

	\begin{remark}
		Some authors define magmas as sets $V$ equipped with some binary operation $\tr:V\times V\to V$; homomorphisms are then defined in the obvious way. For right quasigroups, racks, and quandles, the operation $\tr$ satisfies additional axioms like the right cancellation property.

		This convention is equivalent to our convention via the formula
		\[
			v\tr w=R_w(v).
		\]
		Indeed, it is easy to see that our notion of homomorphism is equivalent to the one using binary operations $\tr$. Our convention appears in~ \cite{survey, virtual}; we adopt this convention because it adapts more easily to graph-theoretic settings.
	\end{remark}
    
	Evidently, magmas and right quasigroups form categories. In particular, we can consider \emph{automorphism groups} $\Aut V_R$ of magmas.

	\subsection{Racks}
    
	Racks and quandles form important full subcategories of the category of right quasigroups.

	\begin{definition}
		Let $V_R$ be a~right quasigroup.
		\begin{itemize}
			\item The \emph{right-multiplication group} $\RMlt V_R$ is the subgroup of $S_V$ generated by all right-multiplication maps.
			\item We say that $V_R$ is a~\emph{rack} if every right-multiplication map $R_v$ is a~magma endomorphism. Concretely, this means that
			\begin{equation}\label{eq:rack}
				R_v R_w = R_{R_v (w)}R_v
			\end{equation}
			for all $v,w\in V$. In this case, we call $R$ a~\emph{rack structure} on $V$.
			\item Separately, we say that $V_R$ is \emph{involutory} if every right-multiplication map is an involution, that is, if $R_v^2 =\id_V$ for all $v\in V$.
		\end{itemize}
	\end{definition}

	\begin{remark}
		A right quasigroup $V_R$ is a~rack if and only if $\RMlt V_R$ is a~(normal) subgroup of $\Aut V_R$. In this case, some authors call $\RMlt V_R$ the \emph{inner automorphism group} of $V_R$ and denote it by $\Inn V_R$. Other authors denote $\RMlt V_R$ by $\operatorname{Mlt}_r V_R$.
	\end{remark}

	\begin{definition}
		Let $V_R$ be a~rack.
		\begin{itemize}
			\item We say that $V_R$ is a~\emph{quandle} if
			\(
			R_v (v)=v
			\)
			for all $v\in V$. In this case, we call $R$ a~\emph{quandle structure} on $V$.
			\item If $V_R$ is an involutory quandle, we call it a~\emph{kei}.
		\end{itemize}
	\end{definition}

	\subsubsection{Examples} We discuss some common examples of right quasigroups, racks, and quandles. See Section~\ref{sec:ex} for further examples.

	\begin{example}[\!\!{\cite[Example 2.13]{book}}]
		Let $G$ be a~union of conjugacy classes in a~group, and define $C:G\rightarrow S_G$ by sending any element $g\in G$ to the conjugation map $C_g$ defined by
		\[
			C_g(h):=ghg\inv.
		\]
		Then $\Conj G:=G_C$ is a~quandle called a~\emph{conjugation quandle} or \emph{conjugacy quandle}.

		If $G$ is a~group, then $\Conj G$ is a~kei if and only if $g^2\in Z(G)$ for all $g\in G$. In particular, not all quandles are involutory.
	\end{example}

	\begin{example}[\!\!{\cite[Example 99]{quandlebook}}]
		Let $V$ be a~set, fix a~permutation $\sigma\in S_V$, and define $R_v:=\sigma$ for all $v\in V$. Then the assignment $R$ is a~rack structure on $V$, and we call $V_\sigma:= V_R$ a~\emph{permutation rack} or \emph{constant action rack}.

		Note that $V_\sigma$ is a~quandle if and only if $\sigma=\id_V$, in which case we call $V_{\id_V}$ a~\emph{trivial quandle}. In particular, not all racks are quandles. Moreover, $V_\sigma$ is involutory if and only if $\sigma$ is an involution.
	\end{example}

	\begin{example}
		The (right) regular action of a~group $G$ is a~rack structure if and only if $G$ is the trivial group. In particular, not all right quasigroups are racks.
	\end{example}

	\subsubsection{Preliminary results}
	
    An alternative characterization of racks does the heavy lifting in solving Problem~\ref{prob1}; see Theorem~\ref{thm:1}.
    
	\begin{proposition}\label{prop:sam}
		Let $V_R$ be a~magma. Then $V_R$ is a~rack if and only if $R$ is a~magma homomorphism from $V_R$ to $\Conj S_V$.
	\end{proposition}

	\begin{proof}
		To prove necessity, suppose that $V_R$ is a~rack. Then $R(V)\subseteq S_V$,

		and Equation~\eqref{eq:rack} states that
		\[
			    RR_v(w)
                =R_{R_v(w)}
                =R_vR_wR_v\inv
                = C_{R_v}(R_w)
                = C_{R(v)}R(w)
		\]
		for all $v,w\in V$. Hence, $R$ is a~magma homomorphism from $V_R$ to $\Conj S_V$.

		To prove sufficiency, suppose that $R$ is a~magma homomorphism from $V_R$ to $\Conj S_V$.

		Similarly to before,
		\[
            R_{R_v(w)}
            =RR_v(w)
            =C_{R(v)}R(w)
            = C_{R_v}(R_w)
            = R_vR_wR_v\inv
		\]
		for all $v,w\in V$. Since $R_v\in S_V$ is bijective, we obtain Equation~\eqref{eq:rack}.
	\end{proof}

	In Sections~\ref{sec:ex} and~\ref{sec:4}, we employ a~necessary condition for a~right quasigroup to be a~rack.

	\begin{lemma}\label{lemma:conj}
		If $V_R$ is a~rack, then $R(V)$ is closed under conjugation.
	\end{lemma}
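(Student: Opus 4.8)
The plan is to read the claim off Proposition \ref{prop:sam}. Since $V_R$ is a rack, that proposition tells us $R$ is a magma homomorphism from $V_R$ to $\Conj S_V$; unwinding the definitions of magma homomorphism and of $\Conj$, this is precisely the assertion that
\[
R_{R_v(w)} = C_{R_v}(R_w) = R_v R_w R_v\inv
\]
for all $v,w\in V$. (Alternatively, one can rearrange Equation \eqref{eq:rack} directly to obtain the same identity.) In particular, conjugating any element $R_w\in R(V)$ by any element $R_v\in R(V)$ returns $R_{R_v(w)}\in R(V)$, so $R(V)$ is closed under conjugation by its own elements.

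To obtain the slightly stronger form that $R(V)$ is stable under conjugation by every element of $\RMlt V_R$ — which is the version invoked in Sections \ref{sec:ex} and \ref{sec:4} — I would first check that conjugation by inverses of right-multiplication maps also works: substituting $R_v\inv(w)$ for $w$ in the displayed identity gives $R_w = R_v R_{R_v\inv(w)} R_v\inv$, hence $R_v\inv R_w R_v = R_{R_v\inv(w)}\in R(V)$. Thus $R(V)$ is preserved by conjugation by each element of the generating set $\{R_v^{\pm 1} : v\in V\}$ of $\RMlt V_R$, and a routine induction on word length extends this to conjugation by all of $\RMlt V_R$.

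I do not anticipate any genuine obstacle here, since all the content is packaged in Proposition \ref{prop:sam}. The only step requiring a moment's care is the inverse case in the second paragraph, which is what upgrades "closed under conjugation by $R(V)$" to "closed under conjugation by $\RMlt V_R$"; everything else is immediate.
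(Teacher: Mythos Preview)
Your proof is correct and essentially the same as the paper's: the paper simply rearranges Equation~\eqref{eq:rack} to obtain $R_vR_wR_v\inv = R_{R_v(w)}\in R(V)$, which is exactly the identity you derive (you route through Proposition~\ref{prop:sam}, but you also note the direct rearrangement). Your second paragraph proves a stronger statement than the lemma asserts or than the paper ever needs---every application in Sections~\ref{sec:ex} and~\ref{sec:4} only uses closure under conjugation by elements of $R(V)$ itself---so it is correct but superfluous.
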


	\begin{proof}
		Equation~\eqref{eq:rack} states that $R_t R_h (v)=R_h R_s (v)$ for all $v,w\in V$.
	\end{proof}

	\section{Graph-theoretic preliminaries}\label{sec:grph}

	We discuss directed and undirected graphs, marked graphs as constructed by Bardakov~\cite{bardakov} and Cayley graphs of magmas as introduced by Caucal~\cite{caucal}. We also relate the latter to \emph{Schreier graphs} of group actions. Since we only discuss labeled digraphs in Section~\ref{sec:5}, we defer defining them until then.

	\subsection{Graphs and digraphs}
    
    We recall the graph-theoretic constructions of Bardakov from~\cite{bardakov}. Like Bardakov, we assume that all undirected graphs are simple, and we do not allow for digraphs to have multiple edges. However, we do allow for digraphs to have loops.

	\begin{definition}
		\phantom{-}
		\begin{itemize}
			\item A \emph{digraph} or \emph{directed graph} $\Gamma$ is a~pair $(V,E)$ where $V$ is a~set and $E$ is a~subset of $V\times V$. We say that $V$ is the \emph{vertex set} of $\Gamma$, and we say that $E$ is the (\emph{directed\textup{)} edge set} or \emph{arc set} of $\Gamma$, respectively.
			\item \emph{Simple undirected graphs}, which we only call \emph{graphs}, are defined similarly to digraphs, except that every element of $E$ is an unordered pair of vertices $\{v,w\}\subseteq V$ such that $v\neq w$.
			\item Given a~(di)graph $\Gamma$, we denote the vertex and edge sets of $\Gamma$ by $V(\Gamma)$ and $E(\Gamma)$, respectively. We say that $|V(\Gamma)|$ is the \emph{order} of $\Gamma$.
		\end{itemize}
	\end{definition}

	\begin{definition}
		Let $\Gamma=(V,E)$ be a~digraph. An \emph{automorphism} of $\Gamma$ is a~permutation $\varphi\in S_V$ such that $(\varphi(v),\varphi(w))\in E$ for all edges $(v,w)\in E$. Automorphisms of graphs are defined similarly.
	\end{definition}

	As with racks and quandles, digraphs and graphs form categories. In particular, we can consider automorphism groups $\Aut\Gamma$ of digraphs and graphs.

	\subsection{Marked graphs}
    
	We consider (di)graphs with \emph{markings} and \emph{q-markings} of their vertices as introduced by Bardakov~\cite{bardakov}.

	\begin{definition}
		Let $\Gamma$ be a~(di)graph with vertex set $V$.
		\begin{itemize}
			\item A \emph{marking} of $\Gamma$ is a~function $R:V\to \Aut \Gamma$ with each image notated as $R_v:=R(v)$. We say that the right quasigroup $V_R^\Gamma:=(V,R)$ is \emph{realized} by the \emph{marked \textup{(}di\textup{)}graph} $(\Gamma, R)$.
			\item Let $R$ be a~marking of $\Gamma$. We say that $R$ is also a~\emph{q-marking} of $\Gamma$ if $R_v (v)=v$ for all $v\in V$. In this case, we call $(\Gamma,R)$ a~\emph{q-marked graph}.
			\item Conversely, we say that a~right quasigroup $Q$ is \emph{realizable} by $\Gamma$ if there exists a~marking $R$ of $\Gamma$ such that $Q\cong V_R^\Gamma$.
		\end{itemize}
	\end{definition}

	\begin{remark}
		Bardakov calls $V_R^\Gamma$ a~\emph{graph groupoid}. We eschew this name to emphasize the fact that $V_R^\Gamma$ is not only a~magma but also a~right quasigroup.

		If $V_R^\Gamma$ is also a~rack (resp.\ quandle), Bardakov calls it a~\emph{graph rack} (resp.\ \emph{graph quandle}).
	\end{remark}

	\begin{remark}\label{obs:q}
		If $(\Gamma,R)$ is a~q-marked graph, then $V^\Gamma_R$ is a~rack if and only if $V^\Gamma_R$ is a~quandle.
	\end{remark}

	\begin{remark}
		Since markings of a~graph $\Gamma$ are simply functions $R:V(\Gamma)\to\Aut\Gamma$, the number of right quasigroup structures on $V(\Gamma)$ whose right-multiplication maps lie in $\Aut\Gamma$ equals
		\[
			\lvert\Aut\Gamma\rvert^{|V(\Gamma)|}.
		\]
	\end{remark}

	\begin{example}
		Given any (di)graph $\Gamma$, the trivial marking $v\mapsto \id_{V(\Gamma)}$ realizes a~trivial quandle.
	\end{example}

	\begin{example}
		Let $n\in\Z^+$ be a~positive integer, and let $\Gamma$ be the \emph{star graph} $K_{1,n}$ of order $n+1$; cf.\ Table~\ref{tab1} in Section~\ref{sec:3}. Then $\Aut \Gamma\cong S_n$ acts on $V:=V(\Gamma)$ by permuting the leaves.

		If $n=2$, it is easy to see that of the eight possible markings $R:V\to S_n$, exactly four are rack structures. Indeed, if $\ell_1,\ell_2\in V$ are the two leaves and $v\in V$ is the central vertex, then Equation~\eqref{eq:rack} forces $R_{\ell_1}=R_{\ell_2}$. For an example of a~marking of $K_{1,2}$ that does \emph{not} realize a~rack, see Example~\ref{ex:not}.
	\end{example}

	We give more substantial examples following our discussion of Cayley digraphs; see Section~\ref{sec:ex}.

	\subsubsection{Right quasigroups are realizable}
    
	We answer the first question in Problem~\ref{prob2}. Recall that a~digraph $\Gamma=(V,E)$ is called \emph{complete} if
	\[
		E =\{(v,w)\in V\times V\mid v\neq w\}.
	\]
	Complete graphs are defined similarly.
    
	\begin{proposition}\label{prop:edgeless}
		All right quasigroups are realizable by edgeless graphs and complete \textup{(}di\textup{)}graphs.
	\end{proposition}

	\begin{proof}
		Given a~right quasigroup $V_R$, let $\Gamma$ be an edgeless or complete (di)graph with vertex set $V$. Then $\Aut\Gamma=S_V$, so $R:V\to S_V$ is a~marking of $\Gamma$. Hence, $V_R^\Gamma=V_R$.
	\end{proof}

	\begin{remark}
		In Bardakov's original wording~\cite{bardakov}, Proposition~\ref{prop:edgeless} implies that every groupoid (resp.\ rack, quandle) is a~graph groupoid (resp.\ graph rack, graph quandle).
	\end{remark}
    
	\subsection{Cayley graphs}
    
    Having established Proposition~\ref{prop:edgeless}, it is natural to ask which racks are realized by (di)graphs with more intricate structures. To that end, we discuss Cayley (di)graphs of magmas as introduced by Caucal~\cite{caucal}.

	\begin{definition}\label{def:cay}
		Let $V_R$ be a~magma.
		\begin{itemize}
			\item The (\emph{generalized\textup{)} Cayley digraph} of $V_R$ with respect to a~subset $S\subseteq V$ is the digraph $\Gamma(V_R,S)$ with vertex set $V$ and edge set
			\[
				E:=\{(v,R_s(v))\mid v\in V,s\in S\}\subseteq V\times V.
			\]
			We say that $S$ is the \emph{connection set} of $\Gamma(V_R,S)$.
			\item The (\emph{generalized\textup{)} Cayley graph} of $V_R$ with respect to $S$, denoted by $\Gamma_{\und}(V_R,S)$, is the underlying (simple undirected) graph of $\Gamma(V_R,S)$.
			\item If $S=V$, then we call $\Gamma(V):=\Gamma(V_R,V)$ the \emph{full Cayley digraph} of $V_R$. The \emph{full Cayley graph} of $V_R$ is defined similarly.
		\end{itemize}
	\end{definition}

	\begin{remark}
		Unlike with Cayley graphs of \emph{groups} as typically considered in the literature, we do not assume that the connection set $S$ in Definition~\ref{def:cay} is symmetric. That is, we do not assume that $\id_V\notin R(S)$ or that $R(S)=R(S)\inv$; this is consistent with the definitions of Cayley graphs of quandles given in, for example,~\cite{winker, textbook, metrics}.

		We also do not assume that $R(S)$ is a~generating subset of $\RMlt V_R$; this is consistent with the definitions given in, for example,~\cite{caucal, caucal2}. This is why we call the Cayley graphs in Definition~\ref{def:cay} ``generalized.''
	\end{remark}
    
	\subsection{Schreier graphs}
	
    As Iwamoto et al. \cite{metrics} note, Cayley (di)graphs of right quasigroups are special types of Schreier (di)graphs, which are important objects of study in combinatorial and geometric group theory.

	\begin{definition}\label{def:sch}
		Let $T$ be a~subset of a~group $G$, and let $V$ be a~(left) $G$-set. The (\emph{generalized\textup{)} Schreier digraph} $\Gamma^{\Sch}(G,V,T)$ is the digraph with vertex set $V$ and edge set
		\[
			E:=\{(v,t\cdot v)\mid v\in V,\, t\in T\}\subseteq V\times V.
		\]
		The (\emph{undirected\textup{)} Schreier graph} $\Gamma_{\und}^{\Sch}(G,V,T)$ is defined similarly.
	\end{definition}

	\begin{remark}\label{lem:cayley}
		For all vertices $v,w\in V$ of $\Gamma:=\Gamma_{\und}^{\Sch}(G,V,T)$, the pair $\{v,w\}$ is an edge of $\Gamma$ if and only if there exists an element $t\in T$ such that $t\cdot w=v$ or $t\inv\cdot w=v$.
	\end{remark}

	\begin{remark}
		If $T$ is a~symmetric generating subset of $G$, then taking $V:=G$ with the (left) regular action in Definition~\ref{def:sch} recovers the traditional definition of the Cayley (di)graph of a~group.
	\end{remark}

	\begin{remark}\label{rmk:sch}
		Given a~right quasigroup $V_R$ and a~subset $S\subseteq V$, let $G:=\RMlt V_R$ and ${T:=R(S)}$. Then
		\[
			\Gamma^{\Sch}(G,V,T)=\Gamma(V_R,S),\quad\quad \Gamma_{\und}^{\Sch}(G,V,T)=\Gamma_{\und}(V_R,S).
		\]
		In the case that $V_R$ is a~quandle and $R(S)$ generates $G$, Iwamoto et al.\ called $\Gamma_{\und}(V_R,S)$ an \emph{inner graph}; in recent work, they applied the above equality to study quandles using methods from geometric group theory~\cite[Section~3]{metrics}.
	\end{remark}
	
    \subsubsection{Preliminary results}
    
	Our solutions to Problem~\ref{prob2} can be stated nicely in terms of Schreier (di)graphs; cf.\ Theorems~\ref{thm:marked1}--\ref{thm:marked2}.
    
	\begin{proposition}\label{prop:sch1}
		Let $\Gamma:=\Gamma^{\Sch}(G,V,T)$ be a~Schreier digraph with edge set $E$, and let $H$ be a~generating subset of $G$. The following are equivalent:
		\begin{enumerate}
			\item The action of $G$ on $V$ is also an action on $\Gamma$ by digraph automorphisms.
			\item For all elements $h\in H$, $v\in V$, and $s\in T$, there exists an element $t\in T$ such that
			\begin{equation}\label{eq:sch}
				th\cdot v=hs\cdot v.
			\end{equation}
		\end{enumerate}
	\end{proposition}

	\begin{proof}
		$(1)\implies(2)$: Let $h\in H$, $s\in T$, and $v\in V$, so $(v,s\cdot v)\in E$. By assumption, $(h\cdot v, hs\cdot v)\in E$, so there exists an element $t\in T$ that satisfies Equation~\eqref{eq:sch}.

		$(2)\implies (1)$:
		To show that $G$ acts on $\Gamma$ by digraph automorphisms, it suffices to show that $(h\cdot v,hs\cdot v)\in E$ for all elements $h\in H$ and directed edges $(v,s\cdot v)\in E$. By assumption, for all such elements and edges, there exists an element $t\in T$ that satisfies Equation~\eqref{eq:sch}. Hence, $(h\cdot v,hs\cdot v)\in E$.
	\end{proof}

	\begin{proposition}\label{prop:sch2}
		Let $\Gamma:=\Gamma_{\und}^{\Sch}(G,V,T)$ be a~Schreier graph, and let $H$ be a~generating subset of $G$. The following are equivalent:
		\begin{enumerate}
			\item The action of $G$ on $V$ is also an action on $\Gamma$ by graph automorphisms.
			\item For all elements $h\in H$, $v\in V$, and $s\in T$, there exists an element $t\in T$ such that one of the following equations holds:
			\begin{equation*}
				th\cdot v=hs\cdot v,\quad\quad h\cdot v=ths\cdot v.
			\end{equation*}
		\end{enumerate}
	\end{proposition}

	\begin{proof}
		The proof is nearly identical to that of Proposition~\ref{prop:sch1}; the only difference lies in using Remark~\ref{lem:cayley} in the obvious ways.
	\end{proof}

	\section{Motivating examples}\label{sec:ex} 
    
    We consider several examples of Cayley (di)graphs $\Gamma$ of right quasigroups $V_R$ and whether or not $(\Gamma,R)$ is a~marked graph; see also~\cite{bardakov} and~\cite[Section~1.15]{textbook}. These constructions serve as useful (counter)examples later in the paper.

	\begin{example}\label{ex:not}
		Equip the set $V=[3]$ with the right quasigroup structure given by
		\[
			R_1=\id_V,\quad R_2=(23),\quad R_3=(13).
		\]
		By Lemma~\ref{lemma:conj}, $V_R$ is not a~rack because
		\[
			R_3R_2R_3\inv=(12)\notin R(V).
		\]

		Figure~\ref{fig:not} depicts the full Cayley digraph $\Gamma(V_R )$ and the full Cayley graph $\Gamma_{\und}(V_R,V)$.
		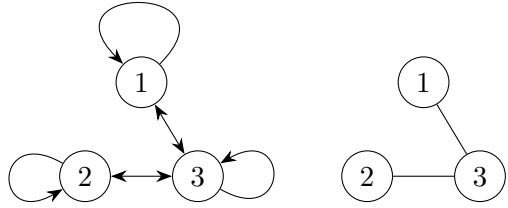
\begin{figure}[!ht]
			\centering
			\begin{tikzpicture}
				\begin{pgfonlayer}{nodelayer}
					\node [style=black] (0) at (3.75, 8.5) {1};
					\node [style=black] (1) at (3, 7.25) {2};
					\node [style=black] (2) at (4.5, 7.25) {3};
					\node [style=black] (3) at (7.5, 8.5) {1};
					\node [style=black] (4) at (6.75, 7.25) {2};
					\node [style=black] (5) at (8.25, 7.25) {3};
				\end{pgfonlayer}

				\begin{pgfonlayer}{edgelayer}
					\draw (4) to (5);
					\draw (5) to (3);
					\draw [style=twoArrows] (0) to (2);
					\draw [style=twoArrows] (1) to (2);
					\draw [style=forward, in=30, out=-30, loop] (2) to ();
					\draw [style=forward, in=-150, out=150, loop] (1) to ();
					\draw [style=forward, in=135, out=45, loop] (0) to ();
				\end{pgfonlayer}
			\end{tikzpicture}
			\caption{Full Cayley digraph and full Cayley graph of the right quasigroup from Example~\ref{ex:not}.}
			\label{fig:not}
		\end{figure}
		Evidently, $V_R$ is not realizable by either $\Gamma(V_R )$ or $\Gamma_{\und}(V_R,V)$; the only nontrivial (di)graph automorphism is $(12)$. In particular, $R$ is a~marking of neither $\Gamma(V_R )$ nor $\Gamma_{\und}(V_R,V)$.
	\end{example}

	\begin{example}\label{ex:3quandle}
		Equip the set $V=[3]$ with the quandle structure given by
		\[
			R_1=(23),\quad R_2=(13),\quad R_3=(12).
		\]
		Note that $V_R$ is a~kei.

		Let $S=\{1\}$. Figure~\ref{fig:3quandle} depicts the partial Cayley digraph $\Gamma(V_R,S)$, the full Cayley digraph $\Gamma(V_R )$, and the full Cayley graph $\Gamma_{\mathrm{und}}(V_R,V)$.
		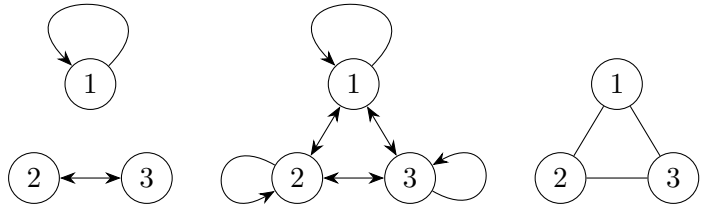
\begin{figure}[!ht]
			\centering
			\begin{tikzpicture}
				\begin{pgfonlayer}
					{nodelayer}
					\node [style=black] (0) at (3.75, 8.5) {1};
					\node [style=black] (1) at (3, 7.25) {2};
					\node [style=black] (2) at (4.5, 7.25) {3};
					\node [style=black] (3) at (0.25, 8.5) {1};
					\node [style=black] (4) at (-0.5, 7.25) {2};
					\node [style=black] (5) at (1, 7.25) {3};
					\node [style=black] (6) at (7.25, 8.5) {1};
					\node [style=black] (7) at (6.5, 7.25) {2};
					\node [style=black] (8) at (8, 7.25) {3};
				\end{pgfonlayer}

				\begin{pgfonlayer}
					{edgelayer}
					\draw [style=twoArrows] (0) to (1);
					\draw [style=twoArrows] (1) to (2);
					\draw [style=twoArrows] (0) to (2);
					\draw [style=forward, in=30, out=-30, loop] (2) to ();
					\draw [style=forward, in=-150, out=150, loop] (1) to ();
					\draw [style=forward, in=135, out=45, loop] (0) to ();
					\draw [style=twoArrows] (4) to (5);
					\draw [style=forward, in=135, out=45, loop] (3) to ();
					\draw (6) to (7);
					\draw (7) to (8);
					\draw (6) to (8);
				\end{pgfonlayer}
			\end{tikzpicture}
			\caption{Partial and full Cayley digraphs and full Cayley graph of the quandle from Example~\ref{ex:3quandle}.}
			\label{fig:3quandle}
		\end{figure}
		Although $R$ is not a~marking of $\Gamma(V_R,S)$ (or even $\Gamma_{\und}(V_R,S)$), it is a~marking of $\Gamma(V_R )$ and, hence, of $\Gamma_{\mathrm{und}}(V_R,V)$.
	\end{example}

	\begin{example}\label{ex:different}
		Nonisomorphic right quasigroups may share the same Cayley graphs and even the same Cayley digraphs. For example, let $V$ be the set $[3]$. Figure~\ref{fig:different} depicts the full Cayley digraph of the right quasigroup $V_R$ defined by
		\[
			R_1=(12),\quad R_2=(13),\quad R_3=(23),
		\]
		the full Cayley digraph of the permutation rack $V_{(123)}$, and the full Cayley graph shared by $V_R$ and $V_{(123)}$.
		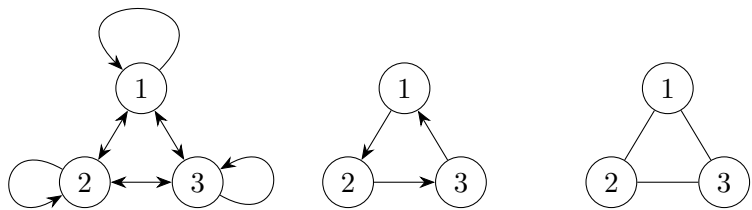
\begin{figure}[!ht]
			\centering
			\begin{tikzpicture}

				\begin{pgfonlayer}
					{nodelayer}
					\node [style=black] (0) at (3.75, 8.5) {1};
					\node [style=black] (1) at (3, 7.25) {2};
					\node [style=black] (2) at (4.5, 7.25) {3};
					\node [style=black] (3) at (0.25, 8.5) {1};
					\node [style=black] (4) at (-0.5, 7.25) {2};
					\node [style=black] (5) at (1, 7.25) {3};
					\node [style=black] (6) at (7.25, 8.5) {1};
					\node [style=black] (7) at (6.5, 7.25) {2};
					\node [style=black] (8) at (8, 7.25) {3};
				\end{pgfonlayer}

				\begin{pgfonlayer}
					{edgelayer}
					\draw (6) to (7);
					\draw (7) to (8);
					\draw (6) to (8);
					\draw [style=twoArrows] (4) to (5);
					\draw [style=forward, in=135, out=45, loop] (3) to ();
					\draw [style=forward, in=-150, out=150, loop] (4) to ();
					\draw [style=forward, in=30, out=-30, loop] (5) to ();
					\draw [style=twoArrows] (3) to (4);
					\draw [style=twoArrows] (3) to (5);
					\draw [style=forward] (0) to (1);
					\draw [style=forward] (1) to (2);
					\draw [style=forward] (2) to (0);
				\end{pgfonlayer}

			\end{tikzpicture}
			\caption{Full Cayley digraphs of the two right quasigroups from Example~\ref{ex:different} and their shared full Cayley graph.}
			\label{fig:different}
		\end{figure}
		Evidently, $V_R$ has the same full Cayley digraph as the quandle from Example~\ref{ex:3quandle}. Moreover, $V_R$ and $V_{(123)}$ have the same full Cayley graph. Of course, none of the right quasigroups in question are isomorphic; $V_R$ is not a~rack, $V_{(123)}$ is a~non-involutory rack, and the quandle from Example~\ref{ex:3quandle} is a~kei.
	\end{example}

	\begin{example}\label{ex:conj}
		Equip the set $V=[4]$ with the right quasigroup structure given by
		\[
			R_1=\id_V,\quad R_2=(1234),\quad R_3=(13)(24), \quad R_4=(24).
		\]
		Note that
		\[
			R_2R_4R_2\inv=(13)\notin R(V)\cup R(V)\inv.
		\]
		In particular, Lemma~\ref{lemma:conj} shows that $V_R$ is not a~rack.

		Figure~\ref{fig:conj} depicts the full Cayley digraph $\Gamma:=\Gamma(V_R )$ and the full Cayley graph $\Gamma_{\und}(V_R,V)$.
		\begin{figure}[!ht]
			\centering
			\begin{tikzpicture}
				\begin{pgfonlayer}
					{nodelayer}
					\node [style=black] (0) at (-1, 1) {1};
					\node [style=black] (1) at (1, 1) {2};
					\node [style=black] (2) at (1, -1) {3};
					\node [style=black] (3) at (-1, -1) {4};
					\node [style=black] (4) at (3, 1) {1};
					\node [style=black] (5) at (5, 1) {2};
					\node [style=black] (6) at (5, -1) {3};
					\node [style=black] (7) at (3, -1) {4};
				\end{pgfonlayer}

				\begin{pgfonlayer}
					{edgelayer}
					\draw [style=forward] (0) to (1);
					\draw [style=forward] (1) to (2);
					\draw [style=forward] (2) to (3);
					\draw [style=forward] (3) to (0);
					\draw (4) to (5);
					\draw (5) to (6);
					\draw (6) to (7);
					\draw (7) to (4);
					\draw (4) to (6);
					\draw (7) to (5);
					\draw [style=twoArrows] (0) to (2);
					\draw [style=twoArrows] (1) to (3);
					\draw [style=forward, in=135, out=45, loop] (0) to ();
					\draw [style=forward, in=135, out=45, loop] (1) to ();
					\draw [style=forward, in=-135, out=-45, loop] (2) to ();
					\draw [style=forward, in=-135, out=-45, loop] (3) to ();
				\end{pgfonlayer}
			\end{tikzpicture}
			\caption{Full Cayley digraph and full Cayley graph of the right quasigroup from Example~\ref{ex:conj}.}
			\label{fig:conj}
		\end{figure}
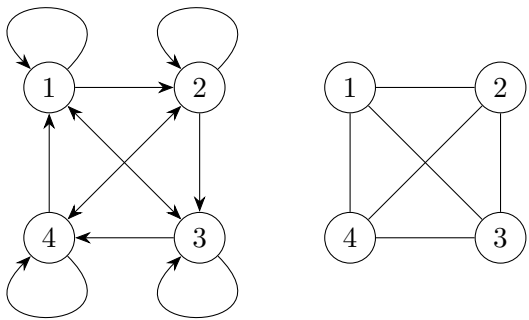
		Unlike in Example~\ref{ex:not}, $R$ is a~marking of $\Gamma$, so $(\Gamma,R)$ realizes $V^\Gamma_R$. (In fact, not only does $R$ land in $\Aut \Gamma$, but in fact $\RMlt V_R =\Aut \Gamma\cong D_4$.)
	\end{example}

	\begin{example}\label{ex:5quandle}
		Equip the set $V=[5]$ with the quandle structure given by
		\[
			R_1=(345),\quad R_2=(354),\quad R_3=(12)(45), \quad R_4=(12)(35), \quad R_5=(12)(34).
        \]
		Let $S=\{1\}$. Figure~\ref{fig:5quandle} depicts the partial Cayley digraph $\Gamma=\Gamma(V_R,S)$ and its underlying graph $\Gamma_{\und}(V_R,S)$.
		\begin{figure}[!ht]
			\centering
			\begin{tikzpicture}

				\begin{pgfonlayer}
					{nodelayer}
					\node [style=black] (0) at (0, 0) {1};
					\node [style=black] (1) at (1, -0.75) {2};
					\node [style=black] (2) at (0.5, -1.75) {3};
					\node [style=black] (3) at (-0.5, -1.75) {4};
					\node [style=black] (4) at (-1, -0.75) {5};
					\node [style=black] (5) at (4.5, 0) {1};
					\node [style=black] (6) at (5.5, -0.75) {2};
					\node [style=black] (7) at (5, -1.75) {3};
					\node [style=black] (8) at (4, -1.75) {4};
					\node [style=black] (9) at (3.5, -0.75) {5};
				\end{pgfonlayer}

				\begin{pgfonlayer}
					{edgelayer}
					\draw [style=forward, in=135, out=45, loop] (0) to ();
					\draw [style=forward, in=135, out=45, loop] (1) to ();
					\draw [style=forward] (2) to (3);
					\draw [style=forward] (3) to (4);
					\draw [style=forward] (4) to (2);
					\draw (7) to (8);
					\draw (8) to (9);
					\draw (9) to (7);
				\end{pgfonlayer}

			\end{tikzpicture}
			\caption{Partial Cayley digraph and underlying Cayley graph of the right quasigroup from Example~\ref{ex:5quandle}.}
			\label{fig:5quandle}
		\end{figure}
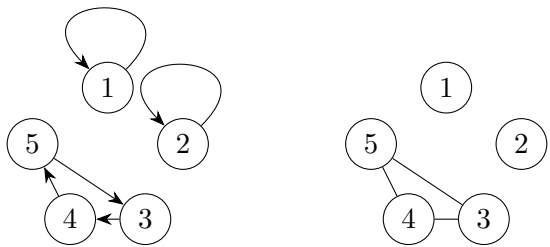
		Evidently, $R$ is not a~marking of $\Gamma(V_R,S)$, but it is a~marking of $\Gamma_{\und}(V_R,S)$.
		(Indeed, the automorphism group of the former is $\langle (345),(12)\rangle\cong\Z/6\Z$, which does not contain $R(V)$, while the automorphism group of the latter is $\RMlt V_R \cong S_3\times\Z/2\Z$.)
	\end{example}

	\section{From marked graphs to racks}\label{sec:3}
    
	In this section, we briefly deduce an answer to Problem~\ref{prob1}. As an application, we introduce two rack-theoretic invariants of graphs.

	\subsection{Solution to Problem~\ref{prob1}}
    
	Our alternative characterization of racks does the heavy lifting.
	\begin{theorem}\label{thm:1}
		Let $R$ be a~marking \textup{(}resp.\ q-marking\textup{)} of a~\textup{(}di\textup{)}graph $\Gamma$ with vertex set $V$. Then $V^\Gamma_R$ is a~rack \textup{(}resp.\ quandle\textup{)} if and only if $R$ is a~magma homomorphism from $V^\Gamma_R$ to $\Conj(\Aut\Gamma)$.
	\end{theorem}

	\begin{proof}
		The statement for markings follows immediately from Proposition~\ref{prop:sam}. Thus, the statement about q-markings follows from Remark~\ref{obs:q}.
	\end{proof}

	\begin{remark}\label{rmk:act}
		Theorem~\ref{thm:1} can be rephrased to state that $(\Gamma,R)$ realizes a~rack if and only if the group action of $\Aut\Gamma$ on $V(\Gamma)$ restricts to a~\emph{rack action} of $R(V(\Gamma))$ on $V(\Gamma)$; cf.~\cite{survey}.
	\end{remark}

	\begin{example}
		Let $\Gamma$ be the complete digraph $\overrightarrow{K_3}$ of order 3 (with loops). Then $\Gamma$ is the full Cayley digraph of the quandle from Example~\ref{ex:3quandle} and the non-rack right quasigroup from Example~\ref{ex:different}.

		These two right quasigroups are constructed using the same permutations of $V=[3]$, all of which happen to be automorphisms of $\Gamma$. However, the different choices of vertices that ${R:V^\Gamma_R\to\Conj(\Aut\Gamma)}$ assigns to those automorphisms determine whether or not $R$ is a~magma homomorphism and, hence, whether or not $V^\Gamma_R$ is a~rack.
	\end{example}

	\subsection{Application of Theorem~\ref{thm:1}}\label{sec:appl}
	
    Given a~(di)graph $\Gamma$, let $\mu_{\rack}(\Gamma)$ (resp.\ $\mu_{\qnd}(\Gamma)$) be the number of markings of $\Gamma$ that realize racks (resp.\ quandles). As an application of Theorem~\ref{thm:1}, we compute these numbers for several graphs and discuss how to compute them in general. This is motivated by Problem~\ref{prob1}.

	In light of Remark~\ref{rmk:act}, Theorem~\ref{thm:1} immediately implies the following.
    
	\begin{corollary}\label{cor:g-sets}
		Let $\Gamma_1$ and $\Gamma_2$ be (di)graphs whose automorphism groups are isomorphic to a~group $G$. If $V(\Gamma_1 )\cong V(\Gamma_2 )$ as $G$-sets, then $(\mu_{\rack}(\Gamma_1 ),\mu_{\qnd}(\Gamma_1 ))=(\mu_{\rack}(\Gamma_2 ),\mu_{\qnd}(\Gamma_2 ))$.
	\end{corollary}

	\begin{corollary}
		The numbers $\mu_{\rack}$ and $\mu_{\qnd}$ are (di)graph invariants.
	\end{corollary}
    
	To compute $\mu_{\rack}(\Gamma)$ given the action of $G:=\Aut \Gamma$ on $V:=V(\Gamma)$, Theorem~\ref{thm:1} implies that it suffices to count the number of functions $R:V\to G$ that are magma homomorphisms from $V_R$ to $\Conj G$. When $n:=|V|$ is finite, this calculation is possible via a~computer search. Namely, we use the \texttt{GRAPE} package~\cite{GRAPE} in \texttt{GAP}~\cite{GAP4} to compute the image $\rho(G)\cong G$ of the permutation representation $\rho:G\hookrightarrow S_n$ under the identification $V=[n]$. To compute $\mu_{\rack}(\Gamma)$ and $\mu_{\qnd}(\Gamma)$, we go through all $|G|^n$ possible functions $R:V\to \rho(G)$ and count how many of the corresponding right quasigroup structures satisfy the rack and quandle axioms.

	We provide an implementation of this exhaustive search algorithm in a~GitHub repository~\cite{code}. With this implementation, we were able to compute $\mu_{\rack}$ and $\mu_{\qnd}$ for complete graphs $K_n$ (equivalently, edgeless graphs), star graphs $K_{1,n-1}$, and cycle graphs $C_n$ for small values of $n$; see Table~\ref{tab1}. We also have the following general results for path graphs and cycle graphs.
	\begin{table}
		[]
		\centering
		\begin{tabular}
			{l|cccccccc}
			\multicolumn{1}{c|}{$n$} & 0 & 1 & 2 & 3 & 4 & 5 & 6 & 7 \\ \hline
			$K_n$ & $(1,1)$ & $(1,1)$ & $(2,1)$ & $(13,5)$ & $(114,36)$ &? &? &? \\
			$K_{1,n-1}$ & n/a & $(1,1)$ & $(2,1)$ & $(4,2)$ & $(31,13)$ & $(390,114)$ &? &? \\
			$C_n$ & n/a & n/a & n/a & $(13,5)$ & $(32,8)$ & $(41,7)$ & $(108,13)$ & $(113,9)$
		\end{tabular}
		\caption{Computations of $(\mu_{\rack},\mu_{\qnd})$ for complete graphs $K_n$, star graphs $K_{1,n-1}$, and cycle graphs $C_n$ for small values of $n$.}
		\label{tab1}
	\end{table}

	\begin{proposition}
		Let $P_n$ be a~path graph of order $n\geq 2$. Then
		\[
			\mu_{\rack}(P_n)=\begin{cases}
                2^k&\text{if }n=2k,\\
                2^{k+1}& \text{if }n=2k+1,\end{cases}\qquad \mu_{\qnd}(P_n)=\begin{cases}
                1&\text{if }n=2k,\\
               2& \text{if }n=2k+1.
           \end{cases}
		\]
	\end{proposition}

	\begin{proof}
		Let $V(P_n )=[n]$ be the vertices of $P_n$ in order, that is, the edges of $P_n$ are $(i,i+1)$ for all $1\leq i\leq n-1$. Then the nonidentity element $\sigma$ of $\Aut P_n\cong \Z/2\Z$ swaps $i$ and $n+1-i$ for all $1\leq i\leq \lfloor n/2\rfloor$. Therefore, a~function $R:V(P_n )\to\Conj(\Aut P_n )$ is a~magma homomorphism if and only if $R_i =R_{n+1-i}$ for all $1\leq i\leq n-1$. Since $\sigma$ has no fixed points if $n=2k$ and a~unique fixed point when $n=2k+1$, the claim follows from Theorem~\ref{thm:1}.
	\end{proof}

	\begin{proposition}\label{prop:app}
		Let $C_n$ be a~cycle graph of order $n\geq 3$, and let $\sigma(n)$ be the sum of all divisors of $n$. Then
		\[
			\mu_{\qnd}(C_n) = \sigma(n)+1.
		\]
	\end{proposition}
    
	The proof of Proposition~\ref{prop:app} uses the geometric interpretation of reflections in the dihedral group ${D_n\cong\Aut C_n}$. Although the proof is not terribly long, we defer it to Appendix~\ref{app:a} to avoid interrupting the flow of the paper.

	\begin{example}
		When $n=3$, five markings of the cycle graph $C_3$ realize quandles. In particular, let $V_R$ be the quandle from Example~\ref{ex:3quandle}. The full Cayley graph $\Gamma_{\und}(V_R,V)$ (which is isomorphic to $C_3$) depicted in Figure~\ref{fig:3quandle}, marked by the quandle structure $R:V\to S_3 =\Aut C_3$, realizes $V_R$. In the following section, we generalize this by showing that \emph{all} racks are realized by their full Cayley (di)graphs.
	\end{example}

	\section{From racks to marked graphs}\label{sec:4}

	In this section, we answer the second question in Problem~\ref{prob2}. We start by addressing a~generalized version of the question for right quasigroups and deduce solutions for racks afterward.

	\subsection{Results for right quasigroups}
    
	Propositions~\ref{prop:sch1} and~\ref{prop:sch2} do the heavy lifting in the directed and undirected cases, respectively.
    
	\begin{theorem}\label{thm:marked1}
		Let $S$ be a~subset of a~right quasigroup $V_R$, and let $\Gamma:=\Gamma(V_R,S)$. The following are equivalent:
		\begin{enumerate}
			\item $(\Gamma,R)$ is a~marked digraph that realizes $V_R$.
			\item $R$ is a~marking of $\Gamma$.
			\item For all $h,v\in V$ and $s\in S$, there exists an element $t\in S$ such that
			\[
				R_t R_h (v)=R_h R_s (v).
			\]
		\end{enumerate}
	\end{theorem}

	\begin{proof}
		$(1)\iff(2)$: Immediate.

		$(2)\iff(3)$: By definition, $H:=R(V)$ generates $G:=\RMlt V_R$. In light of Remark~\ref{rmk:sch}, the equivalence of (2) and (3) is a~special case of Proposition~\ref{prop:sch1}.
	\end{proof}

	\begin{theorem}\label{thm:marked2}
		Let $S$ be a~subset of a~right quasigroup $V_R$, and let $\Gamma:=\Gamma_{\und}(V_R,S)$. The following are equivalent:
		\begin{enumerate}
			\item $(\Gamma,R)$ is a~marked graph that realizes $V_R$.
			\item $R$ is a~marking of $\Gamma$.
			\item For all $h,v\in V$ and $s\in S$, there exists an element $t\in S$ such that one of the following equations holds:
			\[
				R_tR_h(v)=R_hR_s(v),\quad\quad R_h(v)=R_tR_hR_s(v).
			\]
		\end{enumerate}
	\end{theorem}

	\begin{proof}
		Similar to the proof of Theorem~\ref{thm:marked1}, with Proposition~\ref{prop:sch2} in place of Proposition~\ref{prop:sch1}.
	\end{proof}

	\begin{remark}
		The conditions of Theorem~\ref{thm:marked2} are strictly weaker than those of Theorem~\ref{thm:marked1}. Indeed, Example~\ref{ex:5quandle} gives an example of a~quandle $V_R$ and a~subset $S\subseteq V$ such that $R$ is a~marking of $\Gamma_{\und}(V_R,S)$ but not a~marking of $\Gamma(V_R,S)$.
	\end{remark}
    
	\subsection{Specialization to racks}
	
    By considering the third conditions in Theorems~\ref{thm:marked1}--\ref{thm:marked2}, we answer the second question in Problem~\ref{prob2} in its original form: All racks are realizable by their full Cayley (di)graphs.
	
    \begin{corollary}\label{cor:conj}
		In the setting of Theorem~\ref{thm:marked1} \textup{(}resp.\ Theorem~\ref{thm:marked2}\textup{)}, if conjugating $R(S)$ by elements of $R(V)$ lands in $R(S)$ \textup{(}resp.\ $R(S)\cup R(S)\inv$\textup{)}, then $(\Gamma,R)$ is a~marked digraph \textup{(}resp.\ graph\textup{)} realizing $V_R$. In particular, if $V_R$ is a~rack and $\Gamma$ is its full Cayley \textup{(}di\textup{)}graph, then $(\Gamma,R)$ realizes $V_R$.
	\end{corollary}

	\begin{proof}
		The conditions in the first claim directly imply the third conditions in Theorems~\ref{thm:marked1}--\ref{thm:marked2}. Therefore, the second claim follows from Lemma~\ref{lemma:conj}.
	\end{proof}

	\begin{remark}
		If $V_R$ is a~right quasigroup but not a~rack, then it is not true in general that conjugation in $R(V)$ lands in $R(V)$ (resp.\ $R(V)\cup R(V)\inv$). Nevertheless, the full Cayley digraph (resp.\ graph) may still satisfy the conditions of Theorem~\ref{thm:marked1} (resp.\ Theorem~\ref{thm:marked2}); see Example~\ref{ex:conj}.
	\end{remark}

	\begin{remark}
		Certainly, the full Cayley (di)graphs of non-rack right quasigroups do not satisfy the conditions of Theorems~\ref{thm:marked1}--\ref{thm:marked2} in general; see Example~\ref{ex:not}.
	\end{remark}

	\begin{remark}
		The partial Cayley (di)graphs of quandles do not satisfy the conditions of Theorems~\ref{thm:marked1}--\ref{thm:marked2} in general; see Examples~\ref{ex:3quandle} and~\ref{ex:5quandle}.
	\end{remark}
    
	\section{Characterization of labeled Cayley digraphs}\label{sec:5}
	
    In this section, we give a~graph-theoretic characterization of labeled Cayley digraphs of right-cancellative magmas, right-divisible magmas, right quasigroups, and certain classes of racks.

	\subsection{Preliminaries} First, we recall several definitions from nonassociative algebra and the theory of labeled digraphs.

	\subsubsection{Generalizations of right quasigroups}
	We recall two classes of magmas that generalize right quasigroups.

	\begin{definition}
		Let $V_R$ be a~magma. We say that $V_R$ is \emph{right-cancellative} (resp.\ \emph{right-divisible}) if $R_v$ is injective (resp.\ surjective) for all $v\in V$.
	\end{definition}

	\begin{remark}\label{obs:both}
		A magma is a~right quasigroup if and only if it is both right-cancellative and right-divisible.
	\end{remark}

	\begin{remark}
		If $V_R$ is a~finite magma, then $V_R$ is right-cancellative if and only if it is right-divisible.
	\end{remark}

	\begin{example}
		As in Example~\ref{ex:right}, the magma structure given by the addition maps $R_y (x):=x+y$ on the positive rational numbers $\Q^+$ yields a~right-cancellative magma that is not right-divisible.
	\end{example}

	\begin{example}
		Conversely, the magma structure given by $R_y (x):=x^3 -x$ on the set of rational numbers $\Q$ yields a~right-divisible magma that is not right-cancellative.
	\end{example}

	\subsubsection{Labeled digraphs}
    
	Following Caucal~\cite{caucal, caucal2}, we discuss labelings of digraph edges by vertices.

	\begin{definition}
		\phantom{-}
		\begin{itemize}
			\item A \emph{labeled digraph} is a~triple $\Gamma=(V,E,L)$ where $V$ is a~set, $L\subseteq V$ is a~subset, and $E\subseteq V\times L\times V$. Given $(v,\ell,w)\in E$, we say that $\ell$ is the \emph{label} of the \emph{edge} $(v,\ell,w)$. We say that $V$, $L$, and $E$ are the \emph{vertex, labeling,} and (\emph{labeled\textup{)} edge sets} of $\Gamma$, respectively.
			\item The \emph{labeled Cayley digraph} of a~magma $V_R$ with respect to a~\emph{connection set} $S\subseteq V$, denoted by $\Gamma_{\lab}(V_R,S)$, is the labeled digraph $(V,E,S)$ in which
			\[
				E=\{(v,s,R_s(v))\mid v\in V,\, s\in S\}.
			\]
		\end{itemize}
	\end{definition}

	\begin{remark}
		Labeled edges $(v,\ell,w)\in E$ can also be denoted by \emph{transitions} $v\xrightarrow{\ell}w$; for example, see Figures~\ref{fig:rack-cond}--\ref{fig:inv-cond}.
	\end{remark}

	\begin{definition}
		Let $\Gamma=(V,E,L)$ be a~labeled digraph.
		\begin{itemize}
			\item Let $\pi_1:E\to V\times L$ and $\pi_2:E\to L\times V$ be the projections from $E$ onto its first two and last two coordinates, respectively.
			\item We say that $\Gamma$ is \emph{deterministic} (resp.\ \emph{codeterministic}) if $\pi_1$ (resp.\ $\pi_2$) is injective.
			\item We say that $\Gamma$ is \emph{source-complete} or \emph{executable} (resp.\ \emph{target-complete} or \emph{coexecutable}) if $\pi_1$ (resp.\ $\pi_2$) is surjective.
		\end{itemize}
		Let $\mathcal{D}$ denote the class of deterministic, source-complete labeled digraphs.
	\end{definition}

	The following is immediate.
	\begin{remark}\label{obs:lab1}
		Let $V_R$ be a~magma, let $S\subseteq V$, and let $\Gamma=\Gamma_{\lab}(V_R,S)$. Then:
		\begin{itemize}
			\item $\Gamma$ lies in $ \mathcal{D}$.
			\item If $V_R$ is right-cancellative, then $\Gamma$ is codeterministic.
			\item If $V_R$ is right-divisible, then $\Gamma$ is target-complete.
		\end{itemize}
	\end{remark}
    
	Our first objective will be to prove a~converse to Remark~\ref{obs:lab1}; see Proposition~\ref{prop:lab1}.

	\subsection{Construction of $V^\Gamma_R$}

	Given an element $\Gamma=(V,E,L)$ of $\mathcal{D}$, we define a~magma structure $R$ on $V$ as follows. Since we do not consider marked graphs for the remainder of the paper, we denote this magma by $V^\Gamma_R$, overwriting the notation from previous sections.

	For each non-label vertex $v\in V\setminus L$, let $R_v:=\id_V$. Otherwise, for each label $\ell\in L$, define ${R_\ell:V\to V}$ as follows. Given a~vertex $v\in V$, the preimage $\pi_1\inv(v,\ell)$ contains a~unique element $(v,\ell,w)$ because $\Gamma$ is deterministic and source-complete. Thus, define $R_\ell(v):=w$. The assignment $v\mapsto R_v$ makes $V$ into a~magma $V^\Gamma_R$. Verifying the following is straightforward.
    
	\begin{remark}\label{obs:lab0}
		If $\Gamma=(V,E,L)$ is an element of $\mathcal{D}$, then $\Gamma$ is the labeled Cayley digraph $\Gamma_{\lab}(V^\Gamma_R,L)$.
	\end{remark}
    
	\subsection{First results}
	
    We apply our construction of $V^\Gamma_R$.
	
    \begin{proposition}\label{prop:lab1}
		Let $\Gamma=(V,E,L)$ be an element of $\mathcal{D}$.
		\begin{enumerate}
			\item If $\Gamma$ is codeterministic, then $V^\Gamma_R$ is right-cancellative.
			\item If $\Gamma$ is target-complete, then $V^\Gamma_R$ is right-divisible.
		\end{enumerate}
	\end{proposition}

	\begin{proof}
		(1): Suppose that $\Gamma$ is codeterministic, so $\pi_2$ is injective. We have to show for all $\ell\in V$ that $R_\ell$ is injective. If $\ell\notin L$, we are done. Otherwise, suppose for some $v,w\in V$ that
		\[
			R_\ell(v)=R_\ell(w)=:x.
		\]
		Since $\pi_2$ is injective, $\pi_2\inv(\ell,x)$ contains at most one element. Since $\pi_2\inv(\ell,x)$ contains $(v,\ell,x)$ and $(w,\ell,x)$, we obtain $v=w$, as desired.

		(2): Suppose that $\Gamma$ is target-complete, so $\pi_2$ is surjective. We have to show for all $\ell\in V$ that $R_\ell$ is surjective. If $\ell\notin L$, we are done. Otherwise, let $w\in V$. By hypothesis, $\pi_2\inv(\ell,w)$ contains an edge of $\Gamma$, say $(v,\ell,w)$. Hence, $R_\ell(v)=w$.
	\end{proof}

	Henceforth, let $\mathcal{Q}$ denote the subclass of $\mathcal{D}$ whose elements are also codeterministic and target-complete. The following answers Problem~\ref{prob3} for right-cancellative magmas, right-divisible magmas, and right quasigroups.
	\begin{theorem}\label{thm:lab1}
		A labeled digraph is the labeled Cayley digraph of a~right-cancellative magma \textup{(}resp.\ right-divisible magma, right quasigroup\textup{)} if and only if it is an element of $\mathcal{D}$ that is codeterministic \textup{(}resp.\ target-complete, contained in $\mathcal{Q}$\textup{)}.
	\end{theorem}

	\begin{proof}
		The first two claims follow from Remarks~\ref{obs:lab1}--\ref{obs:lab0} and Proposition~\ref{prop:lab1}. Therefore, the third claim follows from Remark~\ref{obs:both}.
	\end{proof}

	\subsection{Specialization to racks}

	Next, we specialize Theorem~\ref{thm:lab1} to racks, quandles, involutory right quasigroups, involutory racks, and kei. To that end, we introduce several graph-theoretic conditions corresponding to the labeled Cayley digraphs of objects in these categories.

	Recall from Remark~\ref{obs:lab0} and Proposition~\ref{prop:lab1} that each element $\Gamma=(V,E,L)$ in $\mathcal{Q}$ is the labeled Cayley digraph of the right quasigroup $V^\Gamma_R$ with respect to the connection set $L$.

	\begin{definition}\label{def:rack-cond}
		Let $\Gamma=(V,E,L)$ be an element of $ \mathcal{Q}$.
		\begin{itemize}
			\item We say that $\Gamma$ satisfies the \emph{first rack condition} if
			\[
				R_{\ell_1}(w_2)=R_{R_{\ell_1}(\ell_2)}(w_1)
			\]
            for all $v\in V$ such that $(v,\ell_1,w_1 ),(v,\ell_2,w_2 )\in E$. See Figure~\ref{fig:rack-cond} for a~visualization. Note that we do not assume that $w_1\neq w_2$.
			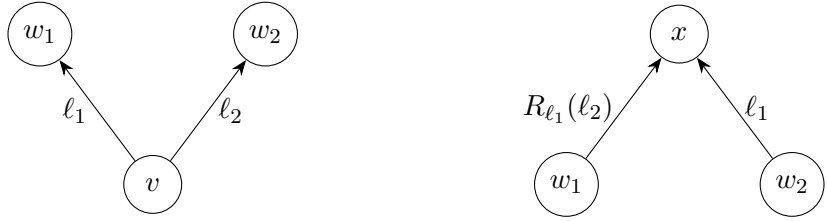
\begin{figure}
				\centering
				\begin{tikzpicture}
					\begin{pgfonlayer}
						{nodelayer}
						\node [style=black, minimum size=2em] (1) at (-1, -1) {$v$};
						\node [style=black, minimum size=2em] (2) at (0.5, 1) {$w_2$};
						\node [style=black, minimum size=2em] (3) at (-2.5, 1) {$w_1$};
						\node [style=black, minimum size=2em] (4) at (6, 1) {$x$};
						\node [style=black, minimum size=2em] (6) at (7.5, -1) {$w_2$};
						\node [style=black, minimum size=2em] (7) at (4.5, -1) {$w_1$};
					\end{pgfonlayer}

					\begin{pgfonlayer}
						{edgelayer}
						\draw [style=forward] (1) to node[left]{$\ell_1$} (3);
						\draw [style=forward] (1) to node[right]{$\ell_2$}(2);
						\draw [style=forward] (7) to node[left]{$R_{\ell_1}(\ell_2 )$}(4);
						\draw [style=forward] (6) to node[right]{$\ell_1$}(4);
					\end{pgfonlayer}

				\end{tikzpicture}
				\caption{The \emph{first rack condition} from Definition~\ref{def:rack-cond} states that for all subgraphs of the form on the left, there also exists a~subgraph of the form on the right, where $x:=R_{\ell_1}(w_2 )$.}
				\label{fig:rack-cond}
			\end{figure}
            
			\item We say that $\Gamma$ satisfies the \emph{second rack condition} if for all edges $(v,\ell,w)$ and non-label vertices $x\in V\setminus L$ such that $R_\ell(x)\in L$ is a~label, then $w$ has a~loop labeled by $R_{\ell}(x)$; i.e.,
			\[
				(w, R_{\ell}(x), w)\in E.
			\]
			See Figure~\ref{fig:rack-cond-2} for a~visualization.
			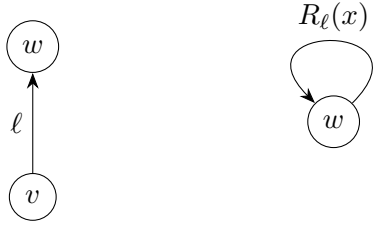
\begin{figure}
				\centering
				\begin{tikzpicture}

					\begin{pgfonlayer}
						{nodelayer}
						\node [style=black] (0) at (-2, -1) {$v$};
						\node [style=black] (1) at (2, 0) {$w$};
						\node [style=black] (2) at (-2, 1) {$w$};
					\end{pgfonlayer}

					\begin{pgfonlayer}
						{edgelayer}
						\draw [style=forward, in=135, out=45, loop] (1) to node[above]{$R_{\ell}(x)$}();
						\draw [style=forward] (0) to node[left]{$\ell$}(2);
					\end{pgfonlayer}

				\end{tikzpicture}
				\caption{The \emph{second rack condition} from Definition~\ref{def:rack-cond} states that for all subgraphs of the form on the left and non-label vertices $x\in V\setminus L$, if $R_\ell(x)\in L$ is a~label, then there exists a~subgraph of the form on the right.}
				\label{fig:rack-cond-2}
			\end{figure}
            
			\item We say that $\Gamma$ is \emph{label-idempotent} if $R_\ell(\ell)=\ell$ for all $\ell\in L$; that is, each vertex $\ell$ contained in the labeling set $L$ has a~loop $(\ell,\ell,\ell)\in E$ labeled by $\ell$.
			\item We say that $\Gamma$ satisfies the \emph{label-involutory} if $R^2_\ell(v)=v$ for all $v\in V$ and $\ell\in L$; that is, the edge set $E$ can be partitioned into loops and cycles of length 2 having the form
			\[
				\{(v,\ell,w),(w,\ell,v)\}.
			\]
			See Figure~\ref{fig:inv-cond} for a~visualization.
			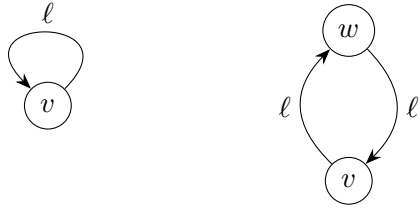
\begin{figure}
				\centering
				\begin{tikzpicture}

					\begin{pgfonlayer}
						{nodelayer}
						\node [style=black] (0) at (2, -1) {$v$};
						\node [style=black] (1) at (-2, 0) {$v$};
						\node [style=black] (2) at (2, 1) {$w$};
					\end{pgfonlayer}

					\begin{pgfonlayer}
						{edgelayer}
						\draw [style=forward, in=135, out=45, loop] (1) to node[above]{$\ell$}();
						\draw [style=forward, bend left=45, looseness=1.25] (0) to node[left]{$\ell$}(2);
						\draw [style=forward, bend left=45, looseness=1.25] (2) to node[right]{$\ell$}(0);
					\end{pgfonlayer}

				\end{tikzpicture}
				\caption{The \emph{label-involutory} condition from Definition~\ref{def:rack-cond} states that subgraphs of the forms on the left and the right partition the edge set.}
				\label{fig:inv-cond}
			\end{figure}
		\end{itemize}
	\end{definition}

	\subsubsection{Result} 
    
    We answer Problem~\ref{prob3} for racks, quandles, and involutory right quasigroups. This also yields answers for involutory racks and kei in the obvious way.
    
	\begin{theorem}\label{thm:lab2}
		Let $\Gamma=(V,E,L)$ be a~labeled digraph.
		\begin{enumerate}
			\item $\Gamma$ is the labeled Cayley digraph of a~rack if and only if $\Gamma$ lies in $ \mathcal{Q}$ and satisfies the two rack conditions.
			\item $\Gamma$ is the labeled Cayley digraph of a~quandle if and only if $\Gamma$ lies in $ \mathcal{Q}$, satisfies the two rack conditions, and is label-idempotent.
			\item $\Gamma$ is the labeled Cayley digraph of an involutory right quasigroup if and only if $\Gamma$ is a~label-involutory element of $ \mathcal{Q}$.
		\end{enumerate}
	\end{theorem}

	\begin{proof}
		By Theorem~\ref{thm:lab1}, we can assume that $\Gamma\in\mathcal{Q}$. As noted before, this inclusion implies that $\Gamma=\Gamma_{\lab}(V^\Gamma_R,L)$.

		(1): First, suppose that $V^\Gamma_R$ is a~rack; we show that $\Gamma$ satisfies the rack conditions. For all vertices $v\in V$ and edges ${(v,\ell_i,w_i )\in E}$, we have $w_i =R_{\ell_i}(v)$. It follows from Equation~\eqref{eq:rack} that
		\[
			R_{\ell_1}(w_2)=R_{\ell_1}R_{\ell_2}(v)=R_{R_{\ell_1}(\ell_2)}R_{\ell_1}(v)=R_{R_{\ell_1}(\ell_2)}(w_1).
		\]
		Hence, $\Gamma$ satisfies the first rack condition. Next, let $(v,\ell,w)\in E$, and let $x\in V\setminus L$ satisfy $R_\ell(x)\in L$. Since $R_x =\id_V$ and $R_\ell(v)=w$, Equation~\eqref{eq:rack} yields
		\[
			R_{R_\ell(x)}(w)
            = R_{R_\ell(x)}R_\ell(v)
            = R_\ell R_x(v)
            = R_\ell(v)
            = w.
		\]
		Since $\Gamma=\Gamma_{\lab}(V^\Gamma_R,L)$, it follows that $(w,R_{\ell}(x),w)\in E$, so $\Gamma$ satisfies the second rack condition.

		Conversely, suppose that $\Gamma\in \mathcal{Q}$ satisfies the two rack conditions; we show that $V^\Gamma_R$ is a~rack. We have to verify that
		\begin{equation*}
			R_{\ell_1}R_{\ell_2}(v)=R_{R_{\ell_1}(\ell_2 )}R_{\ell_1}(v)
		\end{equation*}
		for all $\ell_1,\ell_2,v\in V$. If $\ell_1\notin L$, we are done. Next, suppose that $\ell_1\in L$ and $\ell_2\notin L$. If $R_{\ell_1}(\ell_2 )\notin L$, we are done. Otherwise, applying the second rack condition to the edge $(v,\ell_1,R_{\ell_1}(v))$ yields the desired equality. Finally, if $\ell_1,\ell_2\in L$, then $(v,\ell_i,w_i )\in E$ with $w_i:=R_{\ell_i}(v)$. Since $\Gamma$ satisfies the first rack property,
		\[
			R_{\ell_1}R_{\ell_2}(v)=R_{\ell_1}(w_2)=R_{R_{\ell_1}(\ell_2)}(w_1)=R_{R_{\ell_1}(\ell_2)}R_{\ell_1}(v).
		\]
		(2): By the previous claim, it suffices to show that $V^\Gamma_R$ is a~quandle if and only if $\Gamma$ satifies the label-idempotence condition. But this is clear from the construction of $V^\Gamma_R$.

		(3): Clear from the construction of $V^\Gamma_R$.
	\end{proof}

	\begin{corollary}\label{cor:kei}
		Let $\mathcal{I}$ be the subclass of $\mathcal{Q}$ whose elements are label-involutory and satisfy the two rack conditions. Then $\mathcal{I}$ (resp.\ the subclass of $\mathcal{I}$ whose elements are label-idempotent) is precisely the class of labeled Cayley digraphs of involutory racks (resp.\ kei).
	\end{corollary}
    
	\section{Open questions}\label{sec:6}
	
    We conclude by proposing directions for future work.
	First, Problem~\ref{prob1} motivates the following.
	
    \begin{problem}\label{prob:compute}
		Compute $\mu_{\rack}$ and $\mu_{\qnd}$ for more families of (di)graphs.
	\end{problem}

	\begin{problem}\label{prob:compute2}
		Add more entries to Table~\ref{tab1}.
	\end{problem}
    
	A more computationally efficient implementation of the algorithm described in Subsection~\ref{sec:appl} will help in addressing Problems~\ref{prob:compute}--\ref{prob:compute2}.

	The existence of nonisomorphic graphs that satisfy the hypotheses of Corollary~\ref{cor:g-sets}---for example, any non-self-complementary graph $\Gamma$ and its complement $\overline{\Gamma}$---shows that the pair $(\mu_{\rack},\mu_{\qnd})$ is not a~complete invariant of graphs. In this light, it is interesting to ask the following.
    
	\begin{problem}
		Under what conditions do nonisomorphic graphs share the same values of $\mu_{\rack}$ and/or $\mu_{\qnd}$ without satisfying the hypotheses of Corollary~\ref{cor:g-sets}?
	\end{problem}
    
	Finally, Problem~\ref{prob3} and recent work applying geometric group theory to quandle theory~\cite{metrics, kedra, saraf, kapari} motivate further analogues of Theorem~\ref{thm:lab2}.
	
    \begin{problem}
		Characterize labeled Cayley digraphs of various classes of racks (e.g., medial racks, Latin quandles, fundamental racks of framed links).
	\end{problem}

	\begin{problem}
		Characterize unlabeled Cayley graphs of right quasigroups, racks, and quandles.
	\end{problem}

	\begin{problem}
		Define and characterize Cayley (di)graphs of classes of racks equipped with extra structure (e.g., generalized Legendrian racks~\cite{ta}, multi-virtual quandles~\cite{virtual}, symmetric racks~\cite{ta2}).
	\end{problem}
    
	\appendix

	\section{Proof of Proposition~\ref{prop:app}}\label{app:a}

	\subsection{Preliminaries}
	Let $n\geq 3$. Recall that a~subgroup of the dihedral group $D_n$ is called a~\emph{reflection subgroup} if it is either the trivial subgroup or generated by reflections.

	Recall that the automorphism group of the cycle graph $C_n$ is isomorphic to $D_n$. Therefore, by a~1975 result of Cavior~\cite{cavior}, proving the following will also prove Proposition~\ref{prop:app}.
    
	\begin{proposition}\label{prop:app1}
		Let $\mathcal{S}$ be the set of reflection subgroups of $D_n$, and let $\mathcal{M}$ be the set of markings $R$ of $C_n$ such that $V^{C_n}_R$ is a~quandle. Then there exists a~bijection $\varphi:\mathcal{S}\to\mathcal{M}$.
	\end{proposition}
	
    \subsection{Construction of $\varphi$}
	
    We construct a~function $\varphi:\mathcal{S}\to\mathcal{M}$ geometrically. Given a~reflection subgroup $G\in \mathcal{S}$, let $\varphi(G)$ be the marking $R:V\to D_n$ defined as follows. For each vertex $v\in V$, if $v$ lies on the axis of a~reflection $\psi\in G$ (which is necessarily unique if it exists), then let $R_v:=\psi$. Otherwise, let $R_v:=\id_V$.

	\begin{lemma}
		If $G\in \mathcal{S}$, then $\varphi(G)\in \mathcal{M}$.
	\end{lemma}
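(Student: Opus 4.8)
The plan is to reduce the claim to the rack identity \eqref{eq:rack} for the marking $R := \varphi(G)$, and then verify that identity by a short conjugation argument in $D_n \cong \Aut C_n$. First I would dispatch the routine preliminaries: every value $R_v$ is either $\id_V$ or a reflection $\psi \in G$ having $v$ on its axis, so $R_v \in \Aut C_n$ and $R_v(v) = v$ in all cases; hence $R$ is a q-marking of $C_n$. By Observation \ref{obs:q} it therefore suffices to show that $V^{C_n}_R$ is a rack, and since each $R_v$ is invertible, \eqref{eq:rack} is equivalent to
\[
R_{R_v(w)} = R_v R_w R_v^{-1} \qquad \text{for all } v, w \in V
\]
(this is also exactly the magma-homomorphism condition of Theorem \ref{thm:1}).

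Next I would prove the displayed identity by cases on $R_v$. If $R_v = \id_V$, both sides equal $R_w$, so assume $R_v = \psi_v$ is a reflection of $G$ and put $w' := R_v(w) = \psi_v(w)$. Here I would invoke two facts about $D_n$: (a) a vertex of $C_n$ is fixed by at most one reflection of $D_n$, since such a reflection's axis is forced to be the line through the centre and that vertex; and (b) the reflections of $D_n$ form a single coset of the index-two (hence normal) rotation subgroup, so conjugation by any element of $D_n$ carries reflections to reflections (this holds when $n$ is even as well). If $R_w = \id_V$, then no reflection of $G$ fixes $w'$ --- otherwise such a reflection $\chi$ would yield $\psi_v^{-1}\chi\psi_v \in G$, a reflection fixing $w$, contradicting $R_w = \id_V$ --- so $R_{w'} = \id_V = \psi_v R_w \psi_v^{-1}$. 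If $R_w = \psi_w$ is a reflection of $G$, then $\psi_v\psi_w\psi_v^{-1} \in G$ is a reflection, and it fixes $w'$ because $\psi_v\psi_w\psi_v^{-1}(w') = \psi_v\psi_w(w) = \psi_v(w) = w'$; by fact (a), $R_{w'} = \psi_v\psi_w\psi_v^{-1} = \psi_v R_w \psi_v^{-1}$. In every case the identity holds, so $V^{C_n}_R$ is a rack, hence a quandle by Observation \ref{obs:q}, and thus $\varphi(G) \in \mathcal{M}$.

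The q-marking verification and the $R_v = \id_V$ case are immediate; the substance is the case split on $R_w$ when $R_v$ is a reflection. The step I expect to require the most care is the subcase $R_w = \id_V$, where one must rule out any reflection of $G$ fixing $w' = \psi_v(w)$, together with the bookkeeping needed to be sure that conjugation genuinely preserves ``being a reflection of $G$'' --- both the group-closure (automatic since $G$ is a subgroup) and the reflection/rotation dichotomy (fact (b), which should not be taken for granted when $n$ is even).
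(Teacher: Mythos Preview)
Your proof is correct and follows essentially the same route as the paper: verify that $R$ is a q-marking and then establish the rack identity $R_{R_v(w)}=R_vR_wR_v^{-1}$ via the conjugation behaviour of reflections in $D_n$. The paper compresses your case split into one geometric sentence (conjugating the reflection about the axis $\ell$ through $w$ by $R_v$ gives the reflection about $R_v(\ell)$, which is the axis through $R_v(w)$), whereas you spell out more carefully---and correctly---why the subcase $R_w=\id_V$ forces $R_{R_v(w)}=\id_V$ and why the conjugate stays a reflection of $G$.
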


	\begin{proof}
		By construction, $R_v (v)=v$ for all $v\in V$. It remains to show that $R$ is a~rack structure on $V$. That is, we have to show that
		\[
			R_vR_wR_v\inv= R_{R_v(w)}
		\]
		for all vertices $v,w\in V$. But this is geometrically clear: Since $R_w$ is either the identity map or the reflection about the axis $\ell$ containing $w$, the composition $R_v R_w R_v\inv$ is either the identity map or the reflection about the axis $R_v (\ell)$, i.e., the axis containing $R_v (w)$. This transformation is precisely $R_{R_v (w)}$.
	\end{proof}

	\subsection{Bijectivity of $\varphi$}
    
	We construct an inverse map $\varphi\inv:\mathcal{M}\to\mathcal{S}$ as follows. Given a~quandle structure $R\in\mathcal{M}$, each right-multiplication map $R_v$ is either the identity map or a~reflection. This is because every rotation in $D_n$ has no fixed points. Therefore, defining
	\[
		\varphi\inv(R):=\RMlt V_R=\langle R_v\mid v\in V\rangle
	\]
	yields a~function $\varphi\inv:\mathcal{M}\to\mathcal{S}$. Verifying that $\varphi$ and $\varphi\inv$ are mutually inverse is straightforward. This completes the proof of Proposition~\ref{prop:app1} and, hence, that of Proposition~\ref{prop:app}.

    {\footnotesize
    
    }

\end{document}